\theoremstyle{plain}
\newtheorem{theorem}{Theorem}
\newtheorem{lemma}[theorem]{Lemma}
\newtheorem{proposition}[theorem]{Proposition}
\theoremstyle{definition}
\newtheorem{remark}[theorem]{Remark}
\newcommand{\btau}{\boldsymbol{\tau}}
\newcommand{\A}{\mathcal{A}}
\newcommand{\R}{\mathbb{R}}
\newcommand{\N}{\mathbb{N}}
\newcommand{\E}{{\mathbb{E}}}
\newcommand{\F}{{\mathcal{F}}}
\DeclareMathOperator{\locmax}{locmax}
\let\oldmarginpar\marginpar
\renewcommand\marginpar[1]{\-\oldmarginpar[\raggedleft\footnotesize #1]%
{\raggedright\footnotesize #1}}
\definecolor{lightyellow}{RGB}{255,255,102}
\begin{document}

\title[]%
{Sequential Selection of a Monotone Subsequence from a Random Permutation}
\author[]
{Peichao Peng
and J. Michael Steele}

\thanks{Peichao Peng:  Department of Statistics, The Wharton School,
University of Pennsylvania, 3730 Walnut Street, Philadelphia, PA, 19104.
Email address: \texttt{ppeichao@wharton.upenn.edu}}

\thanks{J. M.
Steele:  Department of Statistics, The Wharton School,
University of Pennsylvania, 3730 Walnut Street, Philadelphia, PA, 19104.
Email address: \texttt{steele@wharton.upenn.edu}}

\citationmode{full}

\begin{abstract}
        We find a two term asymptotic expansion for the optimal expected value of a
        sequentially selected monotone subsequence from a random permutation of length $n$.
        A striking feature of this expansion is that tells us that the expected value of optimal
        selection from a random permutation
        is quantifiably larger than optimal sequential selection from an independent sequences
        of uniformly distributed random variables; specifically, it
        is larger by at least $(1/6) \log n +O(1)$.

        \bigskip

        \noindent {\sc Key Words.} Monotone subsequence problem, sequential selection,  online selection, Markov decision problem,
       nonlinear recursion, asymptotics

        \smallskip

        \noindent {\sc Mathematics Subject Classification (2010).}
        Primary: 60C05, 60G40, 90C40; Secondary:  60F99, 90C27, 90C39

\end{abstract}


\maketitle



\section{Sequential Subsequence Problems}

In the classical monotone subsequence problem, one chooses
a random permutation $\pi:[1:n] \rightarrow [1:n]$,
and one considers
the length of its longest increasing subsequence,
\begin{equation*}\label{eq:def-L-sub-n}
L_n=\max\{k: \pi[i_1] < \pi[i_2]< \cdots < \pi[i_k] \quad \text{where } 1\leq i_1< i_2 \cdots < i_k \leq n\}.
\end{equation*}
On the other hand, in the \emph{sequential} monotone subsequence problem one views the values $\pi[1]$, $\pi[2]$, ...
as though they were presented over time
to a decision maker who, when shown the value $\pi[i]$ at time $i$, must decide
(once and for all) either to accept or reject $\pi[i]$ as element of the selected increasing subsequence.

The decision to accept or reject $\pi[i]$ at time $i$ is based on just the knowledge
of the time horizon $n$ and the observed values $\pi[1], \pi[2], \ldots, \pi[i]$. Thus,
in slightly more formal language, the sequential selection problems amounts to the
consideration of random variables of the form
\begin{equation}\label{eq:def-L-tau}
L^{\btau}_n=\max\{k: \pi[\tau_1] < \pi[\tau_2]< \cdots < \pi[\tau_k] \quad \text{where }
1\leq \tau_1< \tau_2 \cdots < \tau_k \leq n \},
\end{equation}
where the indices $\tau_i$, $i=1, 2, \ldots$ are stopping times with respect to the increasing sequence of $\sigma$-fields
$\F_k=\sigma\{\pi[1], \pi[2], \ldots, \pi[k] \, \}$, $1 \leq k \leq n$.
We call a sequence of such stopping times a \emph{feasible selection
strategy}, and, if we use
${\btau}$ as a shorthand for such a
strategy, then the quantity of central interest here can be written as
\begin{equation}\label{eq:def-sn}
s(n) = \sup_{\btau} \E [L^{\btau}_n],
\end{equation}
where one takes the supremum over all feasible selection strategies.

It was conjectured in \citeasnoun{BaerBrock:MathComp68} that
\begin{equation}\label{eq:s(n)-asymptotics}
s(n) \sim \sqrt{2n} \quad \text{as } n \rightarrow \infty,
\end{equation}
and a proof of this relation was first given
in \citeasnoun{SamSte:AP1981}. A much simpler proof of \eqref{eq:s(n)-asymptotics}
was later given by \citeasnoun{Gne:CPC2000} who made use of a recursion that had been used for numerical computations by
\citeasnoun{BaerBrock:MathComp68}.
The main purpose of this note is to show that by a more sustained investigation of that
recursion one can obtain a two  term expansion.

\begin{theorem}[Sequential Selection from a Random Permutation]\label{thm:seq-select-thm} For $n \rightarrow \infty$ one has
the asymptotic relation
\begin{equation}\label{eq:s(n)-asymp}
s(n) = \sqrt{2n} + \frac{1}{6} \log n + O(1).
\end{equation}
\end{theorem}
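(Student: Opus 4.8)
The plan is to turn the optimal sequential selection into a finite-horizon Markov decision problem whose value function collapses to a one-dimensional recursion, and then to push a two-term asymptotic analysis of that recursion. Since the values of a random permutation are a relabelling of $[1:n]$, after the decision maker has made some selections the only relevant statistics are the number $k$ of values still to be presented and the number $j$ of those that exceed the currently held value; conditionally, the future order is a uniformly random arrangement. Writing $V(k,j)$ for the optimal expected number of further selections from the state $(k,j)$, a one-step analysis (condition on whether the next value falls below the current level and, if above, on its rank $r$ among the $j$ admissible values) yields
\[
V(k,j)=\frac{k-j}{k}\,V(k-1,j)+\frac1k\sum_{r=1}^{j}\max\bigl\{1+V(k-1,j-r),\,V(k-1,j-1)\bigr\},
\]
with $V(k,0)=V(0,j)=0$ and $s(n)=V(n,n)$.

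The first key step is to prove, by induction on $j$, that $V(k,j)$ is independent of $k$ for $k\ge j$. Setting $f(j)=V(j,j)$ and substituting $i=j-r$ turns the recursion into $jf(j)=\sum_{i=0}^{j-1}\max\{1+f(i),f(j-1)\}$, and one checks directly that the constant-in-$k$ solution propagates: the identity $jf(j)=\sum_i\max\{\cdots\}$ is exactly what makes $f(j)=\tfrac{k-j}{k}f(j)+\tfrac1k\sum_i\max\{\cdots\}$ hold for every $k$. Hence $s(n)=f(n)$, where $f(0)=0$ and
\[
j\bigl(f(j)-f(j-1)\bigr)=\sum_{i=0}^{j-1}\max\bigl\{1-\bigl(f(j-1)-f(i)\bigr),\,0\bigr\}.
\]
Next I would record the structural facts that make the right-hand side tractable: $f$ is increasing and its increments $d(j)=f(j)-f(j-1)$ are positive and decreasing, so the admissible block $\{i:f(j-1)-f(i)\le 1\}$ is an interval $\{i_*,\dots,j-1\}$ of length $M(j)\sim\sqrt{2j}$. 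Establishing this monotonicity and concavity (again by induction) is routine but needed to license the interval description.

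The analytic heart is a self-consistent two-term expansion of this nonlinear recursion. Substituting $d(j)=(2j)^{-1/2}+c\,j^{-1}+O(j^{-3/2})$, I would set $m=j-1-i$ and write the right-hand side as $\sum_{m=0}^{M}\bigl(1-g(m)\bigr)$ with $g(m)=\sum_{l=1}^{m}d(j-l)$. With the scaling $m=\sqrt{2j}\,s$ one finds $g(m)=s+(2j)^{-1/2}\bigl(\tfrac12 s^2+2cs\bigr)+O(1/j)$ and $M\sim\sqrt{2j}$, while the left-hand side equals $\tfrac12\sqrt{2j}+c+O(j^{-1/2})$. The decisive point is that the outer summation over $m$ is genuinely discrete, so an Euler--Maclaurin correction contributes an extra $+\tfrac12$ that the analogous continuous-value problem (selection from an i.i.d.\ uniform sequence) does not produce; carrying this through gives $\tfrac12\sqrt{2j}+\tfrac13-c+O(j^{-1/2})$ on the right, and matching the two sides forces $c=\tfrac16$. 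This discreteness is precisely what lifts the second-order term into the positive range and underlies the comparison announced in the abstract. Summing $d(j)=(2j)^{-1/2}+\tfrac16 j^{-1}+O(j^{-3/2})$ from $1$ to $n$ then yields $f(n)=\sqrt{2n}+\tfrac16\log n+O(1)$, which is the assertion.

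I expect the main obstacle to be the rigorous control of the error terms in this last step rather than the formal matching. One must promote the heuristic ansatz to genuine two-sided bounds, presumably by a bootstrap that feeds the crude estimate $d(j)\asymp j^{-1/2}$ (available from the known first-order law $s(n)\sim\sqrt{2n}$) back into the recursion, and one must verify that both layers of Euler--Maclaurin remainder, the displacement of the threshold $M$ from $\sqrt{2j}$, and the accumulated errors in the summation over $j$ are each $O(1)$ at the end. Keeping every discarded term summable to $O(1)$, while isolating the single discreteness effect responsible for the sign and size of the $\tfrac16$, is the delicate part of the argument.
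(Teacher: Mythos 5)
Your reduction to the one-dimensional recursion is sound and in fact lands exactly on the paper's starting point: the state-collapse $V(k,j)=f(j)$ for $k\ge j$ reproduces the Baer--Brock recursion $(n+1)s(n+1)=\sum_{i=0}^{n}\max\{s(n),\,1+s(i)\}$, which the paper obtains more directly by first-step analysis on $\pi[1]$ (rejecting leaves a random permutation of length $n$; accepting $\pi[1]=k$ leaves one of length $n+1-k$). Your formal matching is also consistent with the paper's computation, where the constant emerges as $\tfrac12-\tfrac16-\tfrac16=\tfrac16$ after an exact cancellation of the $O(1)$ displacement of the maximizing index, and you are right that the discreteness of the summation is what makes the second-order term positive.

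The genuine gap is in the step you call ``routine.'' You assert that the increments $d(j)=f(j)-f(j-1)$ are decreasing, i.e., that $n\mapsto s(n)$ is concave, and your entire rigorization scheme then runs through the pointwise increment asymptotics $d(j)=(2j)^{-1/2}+\tfrac16 j^{-1}+O(j^{-3/2})$. Concavity of $s$ is not routine: the paper explicitly records it as an open problem (only $n\mapsto\widehat{s}(n)$ is known to be concave). Moreover, the increment asymptotics are strictly stronger than the theorem you are trying to prove, and your proposed bootstrap has no specified contraction mechanism: the threshold set in the recursion depends on sums of the very increments being estimated, so feeding the crude bound $d(j)\asymp j^{-1/2}$ back in does not obviously self-improve to two-sided second-order control. (Incidentally, the interval structure of $\{i: f(j-1)-f(i)\le 1\}$ needs only monotonicity of $f$, not concavity, so that appeal is unnecessary where you make it; it is the increment expansion that would need the unavailable regularity.) The paper's way around this is to reverse the roles: rather than expanding the increments of the unknown $s$, one inserts the explicit candidate $f(n)=\sqrt{2n}+\tfrac16\log n$ into the max form of the recursion, proves the one-step residual is $O(n^{-3/2})$ (Proposition \ref{pr:f-prop}, after localizing the maximizer at $k^*(n)=\sqrt{2n}+O(1)$ and exploiting the cancellation of the displacement $r(n)$), and then closes with the upper and lower comparison lemmas (Lemmas \ref{lm:s-bound-by-deltas} and \ref{lm:s-bound-by-deltas-below}) --- a discrete stability estimate that requires nothing about $s$ beyond its monotonicity --- so the accumulated error is $\sum_n O(n^{-3/2})=O(1)$, indeed $|s(n)-f(n)|\le \zeta(3/2)B$. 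If you want to salvage your route, that comparison-lemma device is the missing ingredient: it converts your self-consistent expansion into a theorem without ever establishing regularity of the increments of $s$.
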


Given what is known for some closely related problems, the explicit second order term $(\log n)/6$ presents something of a surprise.
For comparison, suppose we consider sequential selection from a sequence of $n$ independently uniformly distributed
random variables $X_1$, $X_2$, ..., $X_n$. In this problem a feasible selection strategy $\btau$ is again expressed by
an increasing sequence of stopping times $\tau_j$, $j=1,2, \ldots$, but now the stopping times are adapted to the increasing $\sigma$-fields
$\widehat{\F}_j=\sigma\{X_1, X_2, \ldots, X_j\}$. The analog of \eqref{eq:def-L-tau} is then
\begin{equation}\label{eq:def-L-hat-tau}
\widehat{L}^{\btau}_n=\max\{k: X_{\tau_1} < X_{\tau_2}< \cdots < X_{\tau_k} \quad \text{where }
1\leq \tau_1< \tau_2 \cdots < \tau_k \leq n \},
\end{equation}
and the analog of \eqref{eq:def-sn} is given by
\begin{equation*}\label{eq:def-ell-n}
\widehat{s}(n) = \sup_{\btau} \E [\widehat{L}^{\btau}_n].
\end{equation*}

It was proved by
\citeasnoun{BruRob:AAP1991} that for $\widehat{s}(n)$ one has a uniform upper bound
\begin{equation}\label{eq:BRUpperBound}
\widehat{s}(n) \leq \sqrt{2n} \quad \text{for all } n \geq 1,
\end{equation}
so, by comparison with  \eqref{eq:s(n)-asymp}, we see there is a sense in which sequential selection
of a monotone subsequence from a permutation is \emph{easier} than sequential selection from an independent sequence.
In part, this is intuitive;
each successive observation from a permutation gives useful information about the subsequent values that can be observed.
By \eqref{eq:s(n)-asymp} one quantifies how much this information helps.

Since \eqref{eq:BRUpperBound} holds for all $n$ and since \eqref{eq:s(n)-asymp} is only asymptotic, it also seems natural to ask if there is a
relation between $\widehat{s}(n)$ and  $s(n)$ that is valid for all $n$. There is such a relation if one gives up the logarithmic gap.

\begin{theorem}[Selection for Random Permutations vs Random Sequences]\label{thm:iid-v-perm}
One has for all $n=1,2,\ldots$ that
\begin{equation*}\label{eq:iid-v-perm}
\widehat{s}(n) \leq s(n).
\end{equation*}
\end{theorem}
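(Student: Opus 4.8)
The plan is to prove the inequality by a coupling that lets the permutation decision maker reconstruct, in an online fashion, an i.i.d.\ uniform sequence on which any prescribed i.i.d.\ strategy can be run without loss. The starting observation is the standard decomposition of an i.i.d.\ uniform sample into its order statistics and its rank pattern: if $a_1 < a_2 < \cdots < a_n$ are the order statistics of $n$ i.i.d.\ uniform random variables and $\pi$ is an independent uniform permutation of $[1:n]$, then the sequence $X_i = a_{\pi[i]}$, $1 \le i \le n$, is again i.i.d.\ uniform, and its rank permutation is exactly $\pi$. The feature that makes the coupling work is that monotone subsequences are preserved: for indices $i_1 < \cdots < i_k$ one has $X_{i_1} < \cdots < X_{i_k}$ if and only if $\pi[i_1] < \cdots < \pi[i_k]$, since the $a_j$ are increasing.

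Next, I would fix any feasible i.i.d.\ strategy $\btau$ together with any realization $\vec a = (a_1 < \cdots < a_n)$ of the order statistics, and define a permutation strategy as follows: at time $i$, upon observing $\pi[i]$, set $X_i = a_{\pi[i]}$, feed $X_1, \ldots, X_i$ to $\btau$, and select position $i$ in the permutation problem exactly when $\btau$ elects to select $X_i$. Because $a_{\pi[i]}$ is a function of $\pi[i]$ and the fixed vector $\vec a$, every decision is measurable with respect to $\F_i = \sigma(\pi[1], \ldots, \pi[i])$, so this is a genuine feasible selection strategy for the permutation problem. By the order preservation noted above, the indices it selects form an increasing subsequence of $\pi$ whose length equals $\widehat{L}^{\btau}_n(a_{\pi[1]}, \ldots, a_{\pi[n]})$, and hence its expected reward is at most $s(n)$ for each fixed $\vec a$.

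Finally, I would average over $\vec a$ distributed as the order statistics of $n$ i.i.d.\ uniforms, taken independently of $\pi$. On the one hand, each conditional reward is bounded by $s(n)$, so the average is at most $s(n)$; on the other hand, by the decomposition above the averaged input $(a_{\pi[1]}, \ldots, a_{\pi[n]})$ is exactly i.i.d.\ uniform, so the average equals $\E[\widehat{L}^{\btau}_n]$. Taking the supremum over feasible $\btau$ then gives $\widehat{s}(n) \le s(n)$. The point that I would treat as the main obstacle is verifying that the reconstruction is simultaneously \emph{online} and \emph{exactly i.i.d.}: that the global rank $\pi[i]$ revealed at time $i$ is by itself enough to place $X_i$ at its correct order statistic, and that marginalizing over $\vec a$ restores genuine i.i.d.\ uniformity rather than mere exchangeability. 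Packaging the argument through a fixed $\vec a$ and only then averaging is exactly what keeps every intermediate strategy a bona fide pure feasible strategy, so that the bound by $s(n)$ may be applied before the expectation over $\vec a$ is taken.
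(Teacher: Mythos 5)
Your proof is correct, and it rests on the same underlying coupling as the paper's: the permutation decision maker simulates the i.i.d.\ problem by composing the observed ranks with an independently generated vector of uniform order statistics. (The paper realizes that vector as the normalized exponential sums $Y_i=(e_1+\cdots+e_i)/(e_1+\cdots+e_{n+1})$ and sets $X_i=Y_{\pi[i]}$, which is only cosmetically different from your $X_i=a_{\pi[i]}$.) Where you genuinely diverge is in the de-randomization step. The paper views the construction as a \emph{randomized} algorithm for the permutation problem and then invokes the general Markov-decision-process fact that randomization cannot improve on an optimal deterministic strategy, citing Bertsekas and Shreve, Corollary 8.5.1; this is the only nonelementary ingredient in its proof. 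You avoid that citation entirely: by fixing the realization $\vec a$ \emph{before} comparing with $s(n)$, every conditional strategy is a bona fide pure feasible strategy adapted to $\F_i=\sigma(\pi[1],\ldots,\pi[i])$, so the bound $\E_\pi[\text{reward}\mid \vec a\,]\leq s(n)$ holds pointwise in $\vec a$, and averaging over $\vec a$ recovers $\E[\widehat{L}^{\btau}_n]$ exactly because the order statistics and the rank permutation of a continuous i.i.d.\ sample are independent with the stated composition law. In effect you carry out by hand, via conditioning and Fubini, the special case of the MDP theorem that the paper uses as a black box; a side benefit is that you bound an arbitrary feasible $\btau$ directly rather than needing an optimal strategy $\A$ to exist. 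Your two flagged verification points --- that the decisions are online (measurable in $\F_i$ since $\vec a$ is fixed) and that marginalizing over $\vec a$ restores genuine i.i.d.\ uniformity rather than mere exchangeability --- are exactly the right ones, and both hold; the only hypothesis worth stating explicitly is the a.s.\ distinctness of the $a_j$, which makes the rank map well defined and the order preservation strict. Both routes yield the inequality for every $n$, with yours being the more self-contained and the paper's the more quotable.
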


Here we should also note that  much more is
known about $\widehat{s}(n)$ than just \eqref{eq:BRUpperBound}; in particular, there are several further connections between
$s(n)$ and $\widehat{s}(n)$.
These are taken up in a later section, but first it will be useful to give the proofs
of Theorems \ref{thm:seq-select-thm} and \ref{thm:iid-v-perm}.

The proof of Theorem \ref{thm:seq-select-thm} takes most of our effort, and it is given over the next few sections.
Section \ref{se:recurrence} develops the basic
recurrence relations, and Section \ref{se:Stable Comparison Relations} develops stability relations for these
recursions. In Section \ref{se:Approximations} we then do the calculations that support a candidate for the asymptotic
approximation of $s(n)$, and we compete the proof of Theorem \ref{thm:seq-select-thm}. Our arguments conclude
in Section \ref{sec:ProofThm2} with the brief --- and almost computation free --- proof of Theorem \ref{thm:iid-v-perm}.
Finally, in Section \ref{se:FurtherConnections} we discuss further relations between $s(n)$, $\widehat{s}(n)$, and some
other closely related
quantities that motivate two open problems.

\section{Recurrence Relations}\label{se:recurrence}

One can get a recurrence relation for $s(n)$ by first step analysis.
Specifically, we take a random permutation $\pi:[1:n+1] \rightarrow [1:n+1]$, and
we consider its initial value $\pi[1]=k$.
If we reject  $\pi[1]$ as an element of our subsequence, we are faced with the problem of sequential
selection from the reduced random permutation $\pi'$ on an $n$-element set.
Alternatively, if we choose $\pi[1]=k$ as an element of our subsequence, we are then faced
with the problem of sequential selection for a reduced random permutation $\pi''$ of the set $\{k+1,k+2, \ldots, n+1\}$ that
has $n+1-k$ elements.
By taking the better of these
two possibilities, we get from the uniform distribution of $\pi[1]$ that
\begin{align}\label{eq:inside recursion}
s(n+1) = \frac{1}{n+1} \sum_{k=1}^{n+1}\max \{ s(n) ,1+s(n+1-k) \}.
\end{align}
From the definition \eqref{eq:def-sn} of $s(n)$ one has $s(1)=1$, so subsequent values can then be computed by \eqref{eq:inside recursion}.
For illustration and for later discussion, we note that one has
the approximate values:

\medskip
\begin{tabular}{ r | c c c c c c c c c c }			
 $n$ & 1 & 2 & 3 & 4 & 5 & 6 & 7 & 8 & 9 & 10 \\
 \hline
 $s(n)$ & $1$ & $1.5$ & $2$ & $2.375$ & $2.725$ & $3.046$ & $3.333$ &  $3.601$ & $3.857$ & $4.098$ \\
$\sqrt{2n}$ & $1.414$ & $2$ & $2.449$ & $2.828$ & $3.162$ & $3.464$ & $3.742$ & $4$ & $4.243$ & $4.472$. \\
\end{tabular}
\smallskip

Here we observe that for the $10$ values in the table one has $s(n) \leq \sqrt{2n}$, and, in fact, this relation persists
for all $1\leq n \leq 174$. Nevertheless, for $n=175$ one has $\sqrt{2n} < s(n)$, just as \eqref{eq:s(n)-asymp} requires for all
sufficiently large values of $n$.

\medskip

We also know from \eqref{eq:def-sn} that the map $n \mapsto s(n)$ is strictly monotone increasing, and, as a consequence, the
recursion \eqref{eq:inside recursion} can be written a bit more simply as
\begin{align}\label{eq:outside recursion}
s(n+1) &= \frac{1}{n+1} \max_{1\le k\le n}\left\{(n-k+1)s(n) + \sum_{i=n-k+1}^n \{s(i)+1\}\right\} \\
&= \frac{1}{n+1} \max_{1\le k\le n}\left\{(n-k+1)s(n) + k+ \sum_{i=n-k+1}^n s(i)\right\}. \notag
\end{align}
In essence, this recursion goes back to \citeasnoun[p.~408]{BaerBrock:MathComp68}, and it is the basis of most of our analysis.

\section{Comparison Principles}\label{se:Stable Comparison Relations}

Given a map $g: \N \rightarrow \R$ and  $1 \leq k \leq n$, it will be convenient to set
\begin{equation}\label{def:H}
H(n,k,g) = k+(n-k+1)g(n) + \sum_{i=n-k+1}^n g(i),
\end{equation}
so the optimality recursion \eqref{eq:outside recursion} can be written more succinctly as
\begin{equation}\label{eq:HandS}
s(n+1)=    \frac{1}{n+1} \max_{1\le k\le n} H(n,k, s).
\end{equation}
The next two lemmas make rigorous the idea that if $g$ is almost a solution of \eqref{eq:HandS} for all $n$, then
$g(n)$ is close to $s(n)$ for all $n$.

\begin{lemma}[Upper Comparison]\label{lm:s-bound-by-deltas}  If $\delta: \N \rightarrow \R^+$, $1 \leq g(1) + \delta(1)$, and
\begin{equation}\label{eq:InductionCondition}
\frac{1}{n+1} \max_{1\leq k \leq n} H(n,k,g) \leq g(n+1) + \delta(n+1) \quad \text{for all } n\geq 1,
\end{equation}
then one has
\begin{equation}\label{eq:s-bound-by-deltas}
s(n) \leq g(n) + \sum_{i=1}^n \delta (i) \quad \text{for all } n\geq 1.
\end{equation}
\end{lemma}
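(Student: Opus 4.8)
The plan is to prove the bound by induction on $n$, using the monotonicity of the functional $H(n,k,\cdot)$ together with the recursion \eqref{eq:HandS}. I would write $\Delta(n) = \sum_{i=1}^n \delta(i)$ for the accumulated error, so that the target inequality \eqref{eq:s-bound-by-deltas} reads $s(n) \leq g(n) + \Delta(n)$. Since $\delta$ takes values in $\R^+$, the sequence $n \mapsto \Delta(n)$ is nondecreasing, a fact that will do the essential work in the inductive step. The base case is immediate: by hypothesis $1 \leq g(1) + \delta(1) = g(1) + \Delta(1)$, while $s(1) = 1$.

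For the inductive step, I would first record the one structural property of $H$ that matters: from its definition \eqref{def:H}, $H(n,k,\cdot)$ is a nonnegative linear combination of the values of its third argument (the weight $n-k+1$ is nonnegative because $k \leq n$, and each summand carries weight one). Hence $H(n,k,\cdot)$ is monotone, so if $s(i) \leq g(i) + \Delta(i)$ holds for all $i \leq n$, then $H(n,k,s) \leq H(n,k,g+\Delta)$ for every $k$. Separating out the linear dependence of $H$ on its argument gives the identity
\begin{equation*}
H(n,k,g+\Delta) = H(n,k,g) + (n-k+1)\Delta(n) + \sum_{i=n-k+1}^n \Delta(i),
\end{equation*}
and because $\Delta$ is nondecreasing each of the $k$ terms $\Delta(i)$ with $n-k+1 \leq i \leq n$ is at most $\Delta(n)$, so the correction is bounded by $(n-k+1)\Delta(n) + k\,\Delta(n) = (n+1)\Delta(n)$, uniformly in $k$.

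Taking the maximum over $1 \leq k \leq n$, dividing by $n+1$, and invoking both the recursion \eqref{eq:HandS} for $s$ and the standing hypothesis \eqref{eq:InductionCondition} for $g$, the monotonicity step and the uniform correction bound together yield
\begin{equation*}
s(n+1) = \frac{1}{n+1}\max_{1 \leq k \leq n} H(n,k,s) \leq \frac{1}{n+1}\max_{1 \leq k \leq n} H(n,k,g) + \Delta(n) \leq g(n+1) + \delta(n+1) + \Delta(n).
\end{equation*}
Since $\delta(n+1) + \Delta(n) = \Delta(n+1)$, the right-hand side is exactly $g(n+1) + \Delta(n+1)$, which closes the induction. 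The one step that requires care --- and the only place where the sign condition $\delta > 0$ is used --- is the uniform-in-$k$ bound on the correction term: one must exploit the monotonicity of the partial sums $\Delta$ to absorb the entire accumulated error back into a single factor $\Delta(n) \leq \Delta(n+1)$, rather than letting it grow with the block length $k$. Everything else is routine bookkeeping with the explicit form of $H$.
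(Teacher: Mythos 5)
Your proposal is correct and follows essentially the same route as the paper's own proof: strong induction on $n$, using the linearity of $H(n,k,\cdot)$ in its third argument together with the monotonicity of the partial sums $\Delta(n)=\sum_{i=1}^n\delta(i)$ to bound the accumulated correction by $(n+1)\Delta(n)$ uniformly in $k$, and then closing the induction via the recursion \eqref{eq:HandS} and the hypothesis \eqref{eq:InductionCondition}. Your treatment is, if anything, slightly more explicit than the paper's (you state the base case and isolate the monotonicity of $H$ as a separate structural observation), but there is no substantive difference.
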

\begin{proof}
We set $\Delta(i)=\delta(1)+\delta(2) +\cdots + \delta(i)$, and we argue by induction. Specifically,
using \eqref{eq:s-bound-by-deltas} for $1 \leq i \leq n$ we have
\begin{align*}
H(n,k,s) &= k +(n-k+1)s(n) + \sum_{i=n-k+1}^n s(i)\\
&\leq k +(n-k+1)(g(n)+\Delta(n))  + \sum_{i=n-k+1}^n \{g(i)+\Delta(i)\}
\end{align*}
so by monotonicity of $\Delta(\cdot)$ we have
$$
\frac{1}{n+1} H(n,k,s) \leq \frac{1}{n+1} H(n,k,g)+ \Delta(n).
$$
Now, when we take the maximum over $k \in [1:n]$,  the recursion \eqref{eq:outside recursion} and the
induction condition \eqref{eq:InductionCondition},
give us
\begin{align*}
s(n+1) &\leq \frac{1}{n+1} \max_{1\leq k \leq n} H(n,k,g)+\Delta(n)\\
&\leq g(n+1) +\delta(n+1) + \Delta(n)=g(n+1)+\Delta(n+1),
\end{align*}
so induction establishes \eqref{eq:s-bound-by-deltas} for all $n\geq 1$.
\end{proof}

Naturally, there is a lower bound comparison principle that parallels Lemma \ref{lm:s-bound-by-deltas}. The statement has
several moving parts, so we frame it as a separate lemma even though its proof can be safely omitted.

\begin{lemma}[Lower Comparison]\label{lm:s-bound-by-deltas-below} If $\delta: \N \rightarrow \R^+$, $g(1) - \delta(1) \leq  1 $, and
\begin{equation*}\label{eq:InductionCondition-lower}
g(n+1) - \delta(n+1)  \leq \frac{1}{n+1} \max_{1\leq k \leq n} H(n,k,g) \quad \text{for all } n\geq 0,
\end{equation*}
then one has
\begin{equation*}\label{eq:s-bound-by-deltas-below}
g(n) - \sum_{i=1}^n \delta (i)  \leq s(n) \quad \text{for all } n\geq 1.
\end{equation*}
\end{lemma}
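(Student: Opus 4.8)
The plan is to run exactly the induction used for Lemma~\ref{lm:s-bound-by-deltas}, but with every inequality reversed. Writing $\Delta(i)=\delta(1)+\delta(2)+\cdots+\delta(i)$, the claim I would establish by induction on $n$ is the pointwise lower bound $g(n)-\Delta(n)\le s(n)$. The base case is immediate: since $s(1)=1$, the hypothesis $g(1)-\delta(1)\le 1$ is precisely $g(1)-\Delta(1)\le s(1)$.

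For the inductive step I would exploit that, for fixed $n$ and $k$, the map $g\mapsto H(n,k,g)$ is monotone in each coordinate with the nonnegative weights $(n-k+1)$ and $1$. Assuming $g(i)-\Delta(i)\le s(i)$ for all $1\le i\le n$ and substituting into the definition \eqref{def:H}, one gets
\begin{align*}
H(n,k,s) &\ge k+(n-k+1)\bigl(g(n)-\Delta(n)\bigr)+\sum_{i=n-k+1}^n\bigl(g(i)-\Delta(i)\bigr)\\
&= H(n,k,g)-(n-k+1)\Delta(n)-\sum_{i=n-k+1}^n\Delta(i).
\end{align*}
Because $\delta$ takes values in $\R^+$, the partial sums $\Delta(\cdot)$ are nondecreasing, so $\Delta(i)\le\Delta(n)$ for every $i\le n$; bounding each $-\Delta(i)$ below by $-\Delta(n)$ collects the $(n-k+1)+k=n+1$ resulting copies of $-\Delta(n)$ and yields the clean estimate $\tfrac{1}{n+1}H(n,k,s)\ge \tfrac{1}{n+1}H(n,k,g)-\Delta(n)$, uniformly in $k$. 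Taking the maximum over $1\le k\le n$, applying the recursion \eqref{eq:HandS}, and then invoking the induction condition $g(n+1)-\delta(n+1)\le \tfrac{1}{n+1}\max_k H(n,k,g)$ gives
\begin{align*}
s(n+1) &\ge \frac{1}{n+1}\max_{1\le k\le n}H(n,k,g)-\Delta(n)\\
&\ge g(n+1)-\delta(n+1)-\Delta(n)=g(n+1)-\Delta(n+1),
\end{align*}
which closes the induction.

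There is no genuine obstacle here, since the lemma is the mirror image of Lemma~\ref{lm:s-bound-by-deltas}; the only thing demanding care is the bookkeeping of signs. The single point at which the two arguments differ substantively is the direction in which the monotonicity of $\Delta$ is used: in the upper comparison one uses $\Delta(i)\le\Delta(n)$ to bound $+\Delta(i)$ from above, whereas here the same inequality is used to bound $-\Delta(i)$ from below. Both reductions rely only on the positivity of $\delta$, and in each the weights of the two groups of terms sum to exactly $n+1$, which is what produces the additive error $\Delta(n)$ after dividing through. I would present the step tersely, as the author suggests, since it adds nothing beyond the upper-bound argument.
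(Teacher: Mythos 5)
Your proof is correct and is exactly the argument the paper intends: the paper omits the proof of Lemma~\ref{lm:s-bound-by-deltas-below} on the grounds that it parallels Lemma~\ref{lm:s-bound-by-deltas}, and your induction --- with the base case $g(1)-\Delta(1)\le 1=s(1)$, the weight count $(n-k+1)+k=n+1$ giving $\tfrac{1}{n+1}H(n,k,s)\ge\tfrac{1}{n+1}H(n,k,g)-\Delta(n)$, and the final appeal to the hypothesis --- is precisely that mirrored argument, carried out without error.
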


\section{An Approximation Solution}\label{se:Approximations}

We now argue that the function $f: \N \rightarrow \R$ defined by
\begin{equation}\label{eq:f-def}
f(n) =\sqrt{2n} + \frac{1}{6} \log n,
\end{equation}
gives one an approximate solution of the recurrence equation \eqref{eq:outside recursion} for $n \mapsto s(n)$.

\begin{proposition}\label{pr:f-prop} There is a constant $0< B < \infty$ such that for all $n\geq 1$, one has
\begin{equation}\label{eq:f-bound-on-H}
-Bn^{-3/2}\leq \frac{1}{n+1} \left\{\max_{1\leq k \leq n} H(n,k,f)\right\} -f(n+1) \leq B n^{-3/2}.
\end{equation}
\end{proposition}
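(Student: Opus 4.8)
The plan is to attack the inner maximization over $k$ directly, exploiting the concavity of $k \mapsto H(n,k,f)$. A one-line computation from the definition \eqref{def:H} gives the first-difference identity
\[
H(n,k+1,f) - H(n,k,f) = 1 - f(n) + f(n-k), \qquad 0 \le k \le n-1 .
\]
Since $f$ is strictly increasing, the right-hand side is strictly decreasing in $k$, so $k \mapsto H(n,k,f)$ is strictly concave and its maximum is attained at the unique integer $k^\ast$ singled out by the sign change $f(n-k^\ast) \le f(n)-1 \le f(n-k^\ast+1)$. Solving the continuous relation $f(n) - f(n-k) = 1$ asymptotically, with $f(x)=\sqrt{2x}+\tfrac16\log x$, yields $k^\ast = \sqrt{2n} + O(1)$, so the optimal selection window has length of order $\sqrt{n}$.

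After multiplying through by $n+1$, the assertion \eqref{eq:f-bound-on-H} is equivalent to
\[
\Bigl|\, \max_{1\le k\le n} H(n,k,f) - (n+1)f(n+1) \,\Bigr| = O(n^{-1/2}).
\]
To obtain this I would replace the sum $\sum_{i=n-k+1}^n f(i)$ by its Euler--Maclaurin approximation $\int f + \tfrac12(f(n)+f(n-k)) + (\text{remainder})$; because $f''(x)=O(x^{-3/2})$ and the window has length $O(\sqrt n)$, the remainder beyond the endpoint term is only $O(n^{-1})$ and all further corrections are negligible, so the sum is captured well inside the tolerance $o(n^{-1/2})$. This produces a smooth function $\Phi(n,k)$ of a real variable $k$ with $H(n,k,f) = \Phi(n,k) + O(n^{-1})$ uniformly in the relevant range, with $\partial_k \Phi = 1 - f(n) + f(n-k)$ and hence $\partial^2_k \Phi = -f'(n-k) \asymp -n^{-1/2}$. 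The $O(n^{-1/2})$ curvature is exactly what lets me pass between the integer maximum and the real maximum $\Phi(n,x^\ast)$ at an $O(n^{-1/2})$ cost, which is the accuracy I can afford.

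It then remains to expand $\Phi(n,x^\ast) - (n+1)f(n+1)$. Writing $x^\ast = \sqrt{2n}+O(1)$ and expanding $\tfrac{2\sqrt2}{3}\bigl(n^{3/2}-(n-x^\ast)^{3/2}\bigr)$, the term $(n-x^\ast+1)f(n)$, and $(n+1)f(n+1)$ by the binomial and logarithmic series, I would track every term of order $n^{3/2}$, $n$, $n^{1/2}\log n$, $n^{1/2}$, $\log n$, and $1$, verify that each cancels, and conclude that the residual is $O(n^{-1/2})$. Dividing by $n+1$ then gives \eqref{eq:f-bound-on-H} for all large $n$, and the finitely many small values of $n$ are absorbed by enlarging $B$.

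The main obstacle is precisely this final expansion: the leading term of $H$ is of order $n^{3/2}$ while the required accuracy is $O(n^{-1/2})$, so roughly six successive orders must cancel. This forces the binomial expansion of $(n-x^\ast)^{3/2}$ to be carried to fourth order and demands that the $O(1)$-accurate location of $x^\ast$ — which itself depends on the coefficient of $\log n$ — be fed in consistently. The delicate point is that the cancellation succeeds exactly when that coefficient equals $1/6$: any other choice leaves a residual that is not $O(n^{-1/2})$, and it is this constraint that is responsible for the $(\log n)/6$ second-order term in Theorem \ref{thm:seq-select-thm}. Throughout, the one genuinely technical demand is to keep all Euler--Maclaurin and Taylor remainders uniform in $n$, so that the accumulated error can be bounded by a single constant $B$.
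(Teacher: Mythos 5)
Your plan is correct in outline and would prove the proposition, but the way you evaluate the maximum differs genuinely from the paper's. The localization step is identical: the same first-difference identity $H(n,k+1,f)-H(n,k,f)=1-f(n)+f(n-k)$, discrete concavity, and the conclusion $k^*(n)=\sqrt{2n}+O(1)$ of \eqref{eq:Kest}. Where you diverge is afterwards: you keep the full sum $\sum_{i=n-k+1}^n f(i)$, of order $n^{3/2}$, smooth it by Euler--Maclaurin (your remainder bookkeeping is sound: $f'(n)-f'(n-k)=O(n^{-1})$ over a window of length $O(\sqrt n)$, and the integer-to-real passage costs $O(|\partial_k^2\Phi|)=O(n^{-1/2})$, exactly the available tolerance), and then must verify cancellation through six successive orders, from $n^{3/2}$ down to $O(n^{-1/2})$. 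The paper sidesteps that entire cascade with the recentering identity \eqref{eq:H-sum-f(n)}: subtracting $f(n)$ from each summand reduces everything to estimating $\sum_{i=1}^{k-1}\{f(n)-f(n-i)\}$, a quantity of size only $O(\sqrt n)$ whose terms are individually small, so that termwise Taylor expansion plus the exact formulas for $\sum i$ and $\sum i^2$ finish in a few lines, with the $O(1)$ uncertainty $r(n)$ in $k^*(n)$ cancelling identically in \eqref{eq:kminusSum}; that cancellation is the discrete counterpart of the stationarity argument you invoke, and indeed stationarity would let you simplify your own final step by evaluating $\Phi$ at the convenient point $\sqrt{2n}$ rather than carrying an unknown $O(1)$ offset through fourth-order binomial expansions, since the two values differ by $O(n^{-1/2})$. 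Your closing diagnosis is also accurate and matches the paper's mechanism: redoing the computation with $f(n)=\sqrt{2n}+c\log n$ leaves, after comparison with $f(n+1)-f(n)=\tfrac{1}{\sqrt{2n}}+\tfrac{c}{n}+O(n^{-3/2})$, a residual $(\tfrac13-2c)n^{-1}+O(n^{-3/2})$, which is $O(n^{-3/2})$ precisely when $c=\tfrac16$. In short: your route works and can be made uniform in $n$ with the care you describe, and it has the modest advantage of exposing the concavity--stationarity structure of the maximization; but the heavy multi-order cancellation you identify as the main obstacle is self-inflicted, and the paper's recentering trick buys the same conclusion with those cancellations built in from the start.
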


\medskip
\noindent
{\sc First Step: Localization of the Maximum}
\smallskip

To deal with the maximum in \eqref{eq:f-bound-on-H}, we first estimate
\begin{equation*}\label{eq:defK}
k^*(n) = \locmax_k \, H(n,k, f).
\end{equation*}
From the definition \eqref{def:H} of $H(n,k,f)$ we find
$$
H(n,k+1,f)-H(n,k,f)=1-f(n)+ f(n-k),
$$
and, from the definition \eqref{eq:f-def} of $f$, we see this difference is monotone decreasing function of $k$; accordingly, we also have
the representation
\begin{equation}\label{eq:char-k}
k^*(n) =1+\max\{ k: 0 \leq 1-f(n)+ f(n-k) \}.
\end{equation}
Now, for each $n=1,2, \ldots$ we  then consider the function $D_n: [0,n] \rightarrow \R$ defined by
setting
$$
D_n(x)= 1-f(n)+ f(n-x)= 1-\{\sqrt{2n}- \sqrt{2(n-x)} \}-\frac{1}{6} \{\log n  -\log (n-x)\}.
$$
This function is strictly decreasing with $D_n(0)=1$ and $D_n(n)=-\infty$, so there is a unique solution
of the equation $D_n(x)=0$.  For $x \in [0, n)$ we also have the easy bound
$$
D_n(x) = 1 -\frac{1}{2} \int_{2(n-x)}^{2n} \frac{1}{\sqrt{u}} \, du -\frac{1}{6} \log(n/(n-x)) \leq 1 - \frac{x}{\sqrt{2n}}.
$$
This gives us $D_n(\sqrt{2n})\leq 0$, so by monotonicity we have $x_n \leq \sqrt{2n}$.

To refine this bound to an asymptotic estimate, we start with he equation $D_n(x_n)=0$ and we apply Taylor expansions to get
\begin{align*}
1&= \sqrt{2n} \left\{ 1- (1-x_n/n)^{1/2}\right\} -\frac{1}{6} \log (1- x_n/n) \\
&=\sqrt{2n} \left\{ \frac{x_n}{2n} +O(x_n^2/n^2) \right\} + O(x_n/n).
\end{align*}
By simplification,  we then get
\begin{equation}\label{eq:xnBigO}
\sqrt{2n}=x_n+O(x_n^2/n) + O(x_n/n^{1/2}) = x_n +O(1),
\end{equation}
where in the last step we used our first bound $x_n \leq \sqrt{2n}$.

Finally, by \eqref{eq:xnBigO} and the characterization \eqref{eq:char-k},  we immediately find the estimate that
we need for $k^*(n)$.
\begin{lemma}\label{lm:f-breakpoint} There is a constant $A>0$ such that for
all $n\geq 1$, we have
\begin{equation}\label{eq:Kest}
\sqrt{2n} -A\leq k^*(n) \leq \sqrt{2n} +A.
\end{equation}
\end{lemma}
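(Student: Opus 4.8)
The plan is to turn the analytic facts already established into the integer statement of the lemma; all the real work has been done, and what remains is only the passage from the real-valued root $x_n$ of $D_n$ to the integer maximizer $k^*(n)$, which costs at most one unit. First I would record the exact relation between the two. Since $D_n$ is strictly decreasing with its unique real zero at $x_n$, for an integer $k$ the inequality $D_n(k)\ge 0$ holds if and only if $k\le x_n$, so
\[
\max\{k : 0\le D_n(k)\}=\lfloor x_n\rfloor .
\]
The characterization \eqref{eq:char-k} then gives $k^*(n)=1+\lfloor x_n\rfloor$, and from $x_n-1<\lfloor x_n\rfloor\le x_n$ we obtain the two-sided bound $x_n<k^*(n)\le x_n+1$.

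Next I would split the two inequalities of \eqref{eq:Kest}, noting that they draw on different inputs. The upper bound is essentially free and unconditional: the earlier estimate $x_n\le\sqrt{2n}$ (valid for every $n\ge 1$) combines with $k^*(n)\le x_n+1$ to give $k^*(n)\le\sqrt{2n}+1$. The lower bound is where \eqref{eq:xnBigO} enters. Writing $C$ for the constant implicit in $\sqrt{2n}=x_n+O(1)$, so that $x_n\ge\sqrt{2n}-C$, the strict inequality $k^*(n)>x_n$ yields $k^*(n)>\sqrt{2n}-C$. Taking $A=\max\{1,C\}$ then delivers \eqref{eq:Kest}.

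The one point needing care is uniformity over \emph{all} $n\ge1$ rather than merely for large $n$. The estimate \eqref{eq:xnBigO} is obtained by Taylor expansion and so applies once $x_n/n$ is small, i.e.\ for $n$ beyond some threshold $n_0$; for the finitely many values $1\le n<n_0$ one uses instead that $1\le k^*(n)\le 1+\sqrt{2n}$ (the left inequality because $D_n(0)=1>0$ forces $\lfloor x_n\rfloor\ge 0$), so $|k^*(n)-\sqrt{2n}|$ is bounded on that finite range and can be absorbed by enlarging $A$. I do not foresee a genuine obstacle here: the substantive difficulty was the localization of $x_n$ carried out above, and the remaining step is the routine rounding argument just sketched.
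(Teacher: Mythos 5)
Your proposal is correct and follows essentially the same route as the paper, which derives the lemma ``immediately'' from the bound $x_n \leq \sqrt{2n}$, the estimate \eqref{eq:xnBigO}, and the characterization \eqref{eq:char-k}; you merely make explicit the rounding step $k^*(n)=1+\lfloor x_n\rfloor$ and the absorption of small $n$ into the constant $A$, details the paper leaves tacit.
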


\medskip

\begin{remark} The relations \eqref{eq:xnBigO} and \eqref{eq:Kest}
can be sharpened. Specifically, if we use a
two-term Taylor series with integral remainders, then one can show
$\sqrt{2n} -2 \leq x_n$. Since we already know that $x_n \leq \sqrt{2n}$, we then see from
the characterization \eqref{eq:char-k} and integrality of $k^*(n)$
that we can take $A=2$ in Lemma \ref{lm:f-breakpoint}. This refinement does not lead to a meaningful improvement
in Theorem \ref{thm:seq-select-thm}, so we omit the details of the expansions with remainders.
\end{remark}

\medskip
\noindent
{\sc Completion of Proof of Proposition  \ref{pr:f-prop} }
\medskip

To prove Proposition \ref{pr:f-prop}, we first note that the definition \eqref{def:H} of
$H(n,k,f)$ one has for all $1\leq k \leq n$ that
\begin{equation}\label{eq:H-sum-f(n)}
\frac{1}{n+1} H(n,k,f)= f(n) + \frac{1}{n+1}\left\{ k - \sum_{i=1}^{k-1} (f(n)-f(n-i)) \right\}
\end{equation}
The task is to estimate the right-hand side of \eqref{eq:H-sum-f(n)} when $k=k^*(n)$  and  $k^*(n)$ is given by \eqref{eq:char-k}.

For the moment, we assume that one has $k \leq D \sqrt n$  where $D>0$ is constant.
With this assumption, we find that after making
Taylor expansions we get from explicit summations that
\begin{align}
\sum_{i=1}^{k-1}& (f(n)-f(n-i))  = \sum_{i=1}^{k-1} \left(\sqrt{2n} - \sqrt{2(n-i)}\right)
+ \sum_{i=1}^{k-1} \left(\frac{\log n}{6} - \frac{\log(n-i)}{6}\right) \notag\\
& = \sum_{i=1}^{k-1}\left(\frac{i}{\sqrt{2n}}+\frac{i^2}{4n\sqrt{2n}}+O\left(\frac{i^3}{n^{5/2}}\right)\right) + \sum_{i=1}^{k-1}\left(\frac{i}{6n}+O\left(\frac{i^2}{n^2}\right)\right) \notag\\
& = \frac{(k-1)k}{2\sqrt{2n}}+\frac{(k-1)k(2k-1)}{24n\sqrt{2n}} + \frac{(k-1)k}{12n} + O(n^{-1/2}),\label{eq:BigSum}
\end{align}
where the implied constant of the remainder term depends only on $D$.

We now define $r(n)$ by the relation $k^*(n)=\sqrt{2n} +r(n)$, and we note by \eqref{eq:Kest} that $|r(n)| \leq A$.
Direct algebraic expansions then give us the elementary estimates
$$
\frac{(k^*(n)-1)k^*(n)}{12n}=\frac{1}{6} + O(n^{-1/2})
$$
and
$$
\frac{(k^*(n)-1)k^*(n)(2k^*(n)-1)}{24n\sqrt{2n}}=\frac{1}{6} + O(n^{-1/2}),
$$
where in each case the implied constant depends only on $A$.

Estimation of the first summand of \eqref{eq:BigSum} is
slightly more delicate than this since we need to account for the dependence of this term on $r(n)$; specifically we have
\begin{align*}
\frac{(k^*(n)-1)k^*(n)}{2\sqrt{2n}}& =\frac{\left(\sqrt{2n}+r(n)-1\right)\left(\sqrt{2n}+r(n)\right)}{2\sqrt{2n}}\\
&=\sqrt{n/2} +r(n) - \frac{1}{2}+ O(n^{-1/2}).
\end{align*}
Now, for a pleasing surprise, we note from the last estimate and from the definition of $k^*(n)$ and $r(n)$ that we have
cancelation of $r(n)$  when we then compute the critical sum; thus, one has simply
\begin{equation}\label{eq:kminusSum}
k^*(n) - \sum_{i=1}^{k^*(n)-1} (f(n)-f(n-i)) = \sqrt{n/2} +\frac{1}{6} +O(n^{-1/2}).
\end{equation}

Finally, from the formula \eqref{eq:f-def} for $f(\cdot)$, we have the Taylor expansion
\begin{equation}\label{eq:delta-f-expansion}
f(n+1) -f(n)= \frac{1}{\sqrt{2n}} + \frac{1}{6n} + O(n^{-3/2}),
\end{equation}
so, when we return to the identity
\eqref{eq:H-sum-f(n)},  we see that the estimates \eqref{eq:kminusSum}
and \eqref{eq:delta-f-expansion} give us the estimate
\begin{align*}
\frac{1}{n+1} &\left\{\max_{1\leq k \leq n} H(n,k,f)\right\} -f(n+1) \\
&=
\frac{1}{n+1} \left( \sqrt{n/2} +\frac{1}{6} +O(n^{-1/2}) \right) +f(n) -f(n+1) =O(n^{-3/2}).
\end{align*}
Here the implied constant is absolute, and the proof of Proposition \ref{pr:f-prop} is complete.

\medskip
\noindent
{\sc Completion of Proof of Theorem  \ref{thm:seq-select-thm} }
\medskip

Lemmas \ref{lm:s-bound-by-deltas} and \ref{lm:s-bound-by-deltas-below} combine with  Proposition \ref{pr:f-prop}
to tell us that by summing the sequence $n^{-3/2}$,  $n=1,2,\ldots$ and by writing $\zeta(z)=1+2^{-z}+3^{-z}+\cdots$ one has
\begin{equation*}\label{eq:first-approx-bound}
|s(n) -f(n)| \leq \zeta(3/2) B \leq (2.62) B \quad \text{for all } n \geq 1.
\end{equation*}
This is slightly more than one needs to complete the proof of Theorem \ref{thm:seq-select-thm}.

\section{Proof of Theorem \ref{thm:iid-v-perm}}\label{sec:ProofThm2}

The sequential monotone selection problem is a finite horizon Markov decision problem
with bounded rewards and finite action space, and for such problems it is known
one cannot improve upon an optimal deterministic strategy by the use of strategies that incorporate randomization,
\citeaffixed[Corollary 8.5.1]{BerShr:AP1978}{cf.}. The proof Theorem \ref{thm:iid-v-perm} exploits this observation
by constructing a randomized algorithm for the sequential selection of a monotone subsequence from a random permutation.

We first recall that if $e_i$, $i=1,2,\dots, n+1$ are
independent exponentially distributed random variables with mean $1$ and if one sets
$$
Y_i=\frac{e_1+e_2 + \cdots + e_i}{e_1+e_2 + \cdots + e_{n+1}},
$$
then the vector $(Y_1,Y_2, \ldots, Y_n)$ has the same distribution as the vector of order statistics $(X_{(1)}, X_{(2)}, \ldots, X_{(n)})$
of an i.i.d.~sample of size $n$ from the uniform distribution.
Next we let $\A$ denote an optimal algorithm for sequential selection from an independent
sample $X_1, X_2, \dots, X_n$ from the uniform distribution,
and we let $\btau(\A)$ denote the associated sequence of stopping times. If
$\widehat{L}^{\btau(\A)}_n$ denotes the length of the subsequence that is chosen from
from $X_1, X_2, \dots, X_n$ when one follows the strategy $\btau(\A)$ determined by $\A$, then by optimality of $\A$ for selection from
$X_1, X_2, \dots, X_n$ we have
$$
\widehat{s}(n) = \sup_{\btau} \E [\widehat{L}^{\btau}_n]=\E[\widehat{L}^{\btau(\A)}_n].
$$

We use the algorithm $\A$ to construct a new randomized algorithm $\A'$ for sequential
selection of an increasing from a random permutation $\pi: [n]\mapsto [n]$.
First, the decision maker generates independent exponential random variables $e_i$, $i=1,2,\dots, n+1$ as above. This is done off-line, and
this step can be viewed as an internal randomization.

Now, for $i=1,2, \ldots, n$, when we are presented with $\pi[i]$ at time $i$, we compute
$X_i =Y_{\pi[i]}$. Finally, if at time $i$ the value $X_i$ would be accepted by the algorithm $\A$,
then the algorithm $\A'$ accepts $\pi[i]$. Otherwise the newly observed value $\pi[i]$ is rejected. By our construction we have
\begin{equation}\label{eq:AprimeValue}
\E[{L}^{\btau(\A')}_n]=\E[\widehat{L}^{\btau(\A)}_n]=\widehat{s}(n).
\end{equation}

Moreover, $\A'$ is a randomized algorithm for construction an increasing subsequence of a random permutation $\pi$.
By definition, $s(n)$ is the expected length of a
monotone subsequence selected from a random permutation by an optimal deterministic algorithm, and by our earlier observation,
the randomized algorithm $\A'$ cannot do better. Thus, from \eqref{eq:AprimeValue} one has $\widehat{s}(n) \leq s(n)$,
and the proof of Theorem \ref{thm:seq-select-thm} is complete.

\section{Further Connections and Considerations}\label{se:FurtherConnections}

As we noted before, \citeasnoun{BruRob:AAP1991} discovered the uniform bound
\begin{equation}\label{eq:BRagain}
\widehat{s}(n) \leq \sqrt{2n} \quad \text{for all } n \geq 1,
\end{equation}
and their proof depended on a general bound for the expected value of the partial sums of the smallest order statistics of a uniformly
distributed random sample.
\citeasnoun{Gne:JAP1999} later gave a much different proof of \eqref{eq:BRagain} and generalized the bound in a way that
accommodate random samples with random sizes.
More recently, \citeasnoun{ArlottoMosselSteele:2015} obtained yet another proof \eqref{eq:BRagain} as a corollary to
bounds on the \emph{quickest selection problem}, which is an informal dual to the traditional selection problem.

Since the bound \eqref{eq:BRagain} is now well understood from several points of view, it is reasonable to ask about the possibility
of some corresponding uniform bound on $s(n)$. The numerical values that we noted after the
recursion \eqref{eq:BRagain} and the relation
\begin{equation}\label{eq:s(n)-asymp-bd-2}
s(n) = \sqrt{2n} + \frac{1}{6} \log n + O(1)
\end{equation}
from Theorem \ref{thm:seq-select-thm} both tell us that one cannot expect a uniform bound for $s(n)$ that is as simple as that
for $\widehat{s}(n)$ given by \eqref{eq:BRagain}.
Nevertheless, numerical evidence suggest that the $O(1)$
term in \eqref{eq:s(n)-asymp-bd-2} is always negative. The tools used here cannot confirm this conjecture, but the multiple perspectives
available for  \eqref{eq:BRagain} give one hope.

A closely related issue arises for $\widehat{s}(n)$ when one considers lower bounds. Here the first steps were taken
by \citeasnoun{BruDel:SPA2001} who considered i.i.d.~samples of size $N_\nu$ where $N_\nu$ is an independent random variable with
the Poisson distribution with mean $\nu$. In the natural (but slightly overburdened) notation, they proved that there is a constant $c>0$ such that
$$
\sqrt{2 \nu} -c \log \nu \leq \widehat{s}(\nu);
$$
moreover, \citeasnoun{BruDel:SPA2004} subsequently proved that for the optimal feasible strategy $\btau_*=(\tau_1, \tau_2, \ldots)$ the random variable
\begin{equation*}\label{eq:def-L-hat-tau}
\widehat{L}^{\btau_*}_{N_\nu}=\max\{k: X_{\tau_1} < X_{\tau_2}< \cdots < X_{\tau_k} \quad \text{where }
1\leq \tau_1< \tau_2 \cdots < \tau_k \leq N_{\nu} \},
\end{equation*}
also satisfies a central limit theorem. \citeasnoun{ArlNguSte:SPA2015}
considered the de-Poissonization of these results, and it was found that
one has the corresponding CLT for $\widehat{L}^{\btau_*}_{n}$ where the sample size $n$ is deterministic. In particular, one has the bounds
$$
\sqrt{2n} - c \log n \leq \widehat{s}(n) \leq \sqrt{2n}.
$$
Now, by analogy with \eqref{eq:s(n)-asymp-bd-2}, one strongly expects that there is a constant $c>0$ such that
\begin{equation}\label{eq:BestConjecture}
\widehat{s}(n) = \sqrt{2n}-c \log n + O(1).
\end{equation}
Still, a proof this conjecture is reasonably remote, since, for the moment,
there is not even a compelling candidate for the value of $c$.

For a second point of comparison, one can recall the \emph{non-sequential} selection problem where one studies
$$
\ell(n)= \E [ \max\{k: X_{i_1}< X_{i_2}< \ldots < X_{i_k}, \, \, 1\leq i_1< i_2< \cdots <i_k \leq n \}].
$$
Through a long sequence of investigations culminating with  \citeasnoun{BaikDeiftKurt99}, it is now known that
one has
\begin{equation}\label{eq:BDK99}
\ell(n)= 2 \sqrt{n} -\alpha n^{1/6} + o(n^{1/6}),
\end{equation}
where the constant $\alpha=1.77108...$ is determined numerically in terms of solutions of a Painlev\'e equation of type II.
\citeasnoun{Rom:CUP2014} gives an elegant account of the extensive technology behind \eqref{eq:BDK99}, and there are
interesting analogies between $\ell(n)$ and $s(n)$. Nevertheless, a proof of the conjecture \eqref{eq:BestConjecture} seems much more likely to
come from direct methods like those used here to prove \eqref{eq:s(n)-asymp-bd-2}.

Finally, one should note that the asymptotic formulas for $n\mapsto \ell(n)$, $n\mapsto s(n)$,
and $n\mapsto \widehat{s}(n)$ all suggest that these maps are concave, but so far only $n\mapsto \widehat{s}(n)$
has been proved to be concave (cf.~\citeasnoun[p. 3604]{ArlNguSte:SPA2015}).

\end{document}